\theoremstyle{plain}
\newtheorem{Thm}{Theorem}
\newtheorem{Pro}[Thm]{Proposition}
\newtheorem{Def}[Thm]{Definition}
\newcommand{\rad}{\operatorname{rad}}
\newcommand{\ep}{\epsilon}
\begin{document}

\title[Proof of Hamilton's conjecture]
{A complete proof of Hamilton's conjecture}

\author{Li Ma}

\email{nuslma@gmail.com}

\thanks{The research is partially supported by the National Natural Science
Foundation of China 10631020 and SRFDP 20090002110019}

\begin{abstract}
In this paper, we give the full proof of a conjecture of R.Hamilton
that for $(M^3, g)$ being a complete Riemannian 3-manifold with
bounded curvature and with the Ricci pinching condition $Rc\geq \ep
R g$, where $R>0$ is the positive scalar curvature and $\ep>0$ is a
uniform constant, $M^3$ is compact. One of the key ingredients to
exclude the local collapse in singularities of the Ricci flow is the
use of pinching-decaying estimate. The other important part of our
argument is to role out the Type III singularity complete noncompact
Ricci flow with positive Ricci pinching condition. We get this goal
by obtaining an Ricci expander based on the monotonicity formula for
forward reduced volume.

{ \textbf{Mathematics Subject Classification 2000}: 53Cxx,35Jxx}

{ \textbf{Keywords}: Ricci flow, Ricci pinching, compactness}
\end{abstract}

 \maketitle

\section{Introduction}

Classical Myers theorem says that a complete Riemannian manifold
with Ricci curvature bounded below by a positive constant is
compact. Based on the study of Ricci flow, R.Hamilton conjectures
(see conjecture 3.39 in page 149 of the book \cite{Cho06}) that
for $(M^3, g)$ being a complete Riemannian 3-manifold with bounded
curvature and with the Ricci pinching condition $Rc\geq \ep R g$,
where $R>0$ is the positive scalar curvature and $\ep\in (0,1)$ is
a uniform constant, $M^3$ is compact. In this paper, we prove this
conjecture.

Note that this conjecture is a consequence of the following
result.
\begin{Thm}\label{thm:1}
Assume that $(M^3, g)$ is a 3-dimensional complete noncompact
Riemannian manifold with bounded sectional curvature. Suppose
$(M^3, g)$ satisfies the following Ricci pinching condition
\begin{equation}\label{eq:1}
R_{ij}\geq \ep Rg_{ij}, \quad on\ M^3
\end{equation}
for some uniform constant $\ep \in (0,1)$. Then $(M^3, g)$ is
flat. Here $R_{ij}$ is the Ricci tensor of $g=(g_{ij})$.
\end{Thm}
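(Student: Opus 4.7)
The plan is a proof by contradiction via the Ricci flow. Suppose $R>0$ at some point of $(M^3,g)$; then by Hamilton's three-dimensional pinching theory the condition $R_{ij}\geq \ep Rg_{ij}$ is preserved under the Ricci flow starting from $(M^3,g)$ and the strong maximum principle forces $R>0$ everywhere for positive times. If instead $R\equiv 0$, then $Rc\geq 0$ together with $\mathrm{tr}(Rc)=0$ gives $Rc\equiv 0$, and in dimension three this means $g$ is flat, as desired. So the only case to rule out is a complete noncompact flow with $R>0$ throughout, which by Shi's theorem exists on some maximal interval $[0,T)$ with $0<T\leq \infty$.

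I would then split the analysis according to how the flow degenerates. In the finite-time singular case $T<\infty$, I apply a point-picking/parabolic rescaling around sequences where $|\mathrm{Rm}|$ realizes its maximum. The first serious obstacle is ensuring non-collapse, because Perelman's $\kappa$-non-collapsing relies on compactness; this is exactly where the pinching-decaying estimate announced in the abstract enters, providing the injectivity-radius lower bound needed to invoke Hamilton's compactness theorem. The resulting pointed limit is a complete non-collapsed ancient Ricci flow on a $3$-manifold, of bounded nonnegative curvature operator by Hamilton-Ivey, inheriting the pinching $R_{ij}\geq \ep Rg_{ij}$, hence a $\kappa$-solution. The pinching with $\ep>0$ forbids a zero Ricci eigenvalue and therefore excludes the round cylinder $S^2\times \mathbb{R}$ and its quotients, as well as the Bryant-type asymptotics; the $3$-dimensional $\kappa$-solution classification then leaves only compact quotients of $S^3$, contradicting that the limit is obtained by rescaling at points escaping to infinity in the noncompact $M^3$.

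For the remaining possibility, an immortal solution with at most Type III curvature decay, I would rescale parabolically as $t\to\infty$ and use the monotonicity of the forward reduced volume (in the Feldman-Ilmanen-Ni sense) along this rescaled sequence to extract a smooth limit that is a complete non-flat gradient expanding Ricci soliton $R_{ij}+\nabla_i\nabla_j f+\tfrac{1}{2\tau}g_{ij}=0$ on a noncompact $3$-manifold, still satisfying $R_{ij}\geq \ep Rg_{ij}$ with $R>0$. Combining the pinching with the soliton identity yields $\nabla^2 f\leq -\tfrac{1}{2\tau}g$, a uniform concavity; such an $f$ has a unique critical point on a complete manifold, while the trace $\Delta f=-R-\tfrac{3}{2\tau}$ together with $R>0$ and integration over level sets of $f$ using $R+|\nabla f|^2+f/\tau=\mathrm{const}$ should yield the contradiction excluding such a soliton.

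The main obstacle is the Type III step: producing a genuine non-flat expander limit from the immortal flow requires both the pinching-decaying estimate to prevent collapse at infinity and the forward reduced volume to supply a monotone quantity driving convergence, after which one must show no pinched noncompact expander exists. These two ingredients, advertised in the abstract, carry the essential new weight; without them the classical singularity analysis, which works for compact Hamilton pinching, cannot be extended to the complete noncompact setting.
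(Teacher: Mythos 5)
Your high-level dichotomy (finite-time singularity vs.\ immortal Type~III flow, ruled out respectively by blow-up + pinching and by forward reduced volume + expander rigidity) matches the paper's architecture, but two of the load-bearing steps are either missing or attributed to the wrong mechanism, so as written the plan has genuine gaps.

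First, you say the pinching-decaying estimate ``prevents collapse,'' but that estimate controls the \emph{trace-free Ricci tensor}, not volume ratios, and it is not what the paper uses to get injectivity radius bounds. For finite-time singularities the paper's Proposition~\ref{pro:2} establishes non-collapse via Hamilton's bump-like vs.\ split-like dichotomy and the Fukaya--Glickenstein lifting trick (a local limit with a zero Ricci eigenvalue is flat, contradicting the normalization). For the Type~III regime the paper's non-collapse argument is entirely different: it introduces the radius function $\rad(x_n)$ and invokes Colding's volume-convergence theorem to force a contradiction if the rescaled volumes degenerate (and, in the appendix, offers an alternative via Lott's \'etale-groupoid limit). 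None of this appears in your sketch; simply citing the pinching-decaying estimate does not produce the required $\kappa$-non-collapsing, and Perelman's entropy argument is unavailable on a noncompact manifold, which is precisely why these substitutes are needed.

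Second, your exclusion of a non-flat pinched noncompact expander is too thin to close the argument. From $R_{ij}+\nabla_i\nabla_j f+\tfrac{1}{2\tau}g_{ij}=0$ and $Rc\geq 0$ one indeed gets $\nabla^2 f\leq -\tfrac{1}{2\tau}g$, but strict concavity of $f$ on a complete manifold is not by itself contradictory (the Gaussian expander on $\mathbb{R}^n$ has exactly this behavior), and ``integration over level sets'' of the soliton identity $R+|\nabla f|^2+f/\tau=\mathrm{const}$ is not spelled out and does not obviously yield a contradiction. The paper instead appeals to established rigidity results for expanding gradient solitons with pinched Ricci curvature (its references \cite{MD}, \cite{MM}) to conclude the expander limit is isometric to $\mathbb{R}^3$, contradicting non-flatness. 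You would need to either import such a rigidity theorem or actually prove it; as stated, this step is a gap. (Your use of the $\kappa$-solution classification for the finite-time case is a legitimate alternative to the paper's direct pinching-decay argument, but it is heavier machinery and still rests on the unestablished non-collapse.)
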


We prove Theorem \ref{thm:1} by using the Ricci flow introduced by
R. Hamilton. By definition, a family $(g(t))$ of Riemannian
metrics on $M^3$ is called a Ricci flow if $g(t)$ satisfies the
following Ricci flow equation
\begin{equation}\label{eq:2}
\partial_t g_{ij}(t)=-2R_{ij}(g(t)), \quad on \ M,
\end{equation}
with $g(0)=g$. In the following we shall assume that our Ricci flow
is not trivial, i.e., it is not flat. One of the key ingredients to
exclude the local collapse in singularities of the Ricci flow is the
use of pinching-decaying estimate in Proposition \ref{pro:key}
below. The other important part of our argument is to role out the
Type III singularity complete noncompact Ricci flow with positive
Ricci pinching condition. We get this goal by obtaining an Ricci
expander based on based on the monotonicity formula for forward
reduced volume (see \cite{CZ} and related \cite{M3}).

With the extra assumption that the sectional curvature is
non-negative (and in this case one has the injectivity radius
bound and Hamilton's Harnack differential inequality for Ricci
flow), the result has been proved in \cite{Che02}. The complex
version of Theorem \ref{thm:1} is a conjecture due to S.T.Yau (see
\cite{SY}). The above question was posed to R.Hamilton by W.X.Shi,
who was asked for this by S.T.Yau.

\section{Preliminary}
In this section, we firstly recall the concept of $k$-non-collapse
Ricci flow in the sense of G.Perelman \cite{P02}.

\begin{Def} Fix $k>0$. A complete solution $(M^n,g(t)$, $0\leq t<\infty$, to Ricci
flow is said $k$-non-collapse on all scales (or no local collapse
at all scales) if for all $(x_0,t_0)\in M\times [0,\infty)$ with
$|Rm|\leq r^{-2}$ on the parabolic ball
$$
P(x_0,t_0, r,-r^2):=\{(x,t); d(x,x_0;t)<r, t_0-r^2<t<t_0\},
$$
we have $Vol_{g(t_0)}(B(x_0,t))\geq kr^n$. Here $d(x,x_0;t)$ is
the distance function defined by the metric $g(t)$.
\end{Def}

We then cite Cheeger-Gromov-Taylor's result, which is in use of
bounding the injectivity radius for Ricci flow.

\begin{Pro}\label{pro:1}(Cheeger-Gromov-Taylor \cite{Che82})
Assume that $(M^n, g)$ is a complete Riemannian manifold of
dimension n and $p\in M$. $\forall \ep>0$, $\exists C_0 \sim  n,
\ep$ with the following property. Suppose that
$$
|Rm(x)|\leq r^{-2}, \quad for\ all\ x\in B(p, r)
$$
and $Vol(B(p, r))\geq \ep r^n$. Then $inj_g(M, p)\geq C_0>0$.
\end{Pro}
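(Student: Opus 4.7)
The plan is to argue by contradiction, combining Rauch's conjugate radius estimate with Bishop--Gromov volume comparison on a local cover. By the scaling $g \mapsto r^{-2} g$ I may reduce to the case $r=1$, so the hypotheses read $|Rm|\leq 1$ on $B(p,1)$ and $\mathrm{Vol}(B(p,1))\geq \ep$, and I must produce an injectivity radius bound $\mathrm{inj}_g(p)\geq C_0(n,\ep)$.

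First I would extract a conjugate radius bound. Since the sectional curvature satisfies $|K|\leq 1$ on $B(p,1)$, Rauch's comparison theorem gives $\mathrm{conj}(p)\geq \pi$ (up to a harmless constant depending on how far inside $B(p,1)$ one wishes to stay). Now suppose for contradiction that the claim fails, so that there is a sequence of examples with $i_k:=\mathrm{inj}_g(p)\to 0$. Because $i_k$ is strictly less than the conjugate radius, Klingenberg's lemma produces, for each $k$, a geodesic loop $\gamma_k$ based at $p$ of length $2i_k$.

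Next I would pass to the local cover provided by the exponential map $\exp_p: B(0,\pi)\subset T_pM \to M$, on which $\exp_p$ is a local diffeomorphism and the pulled-back metric $\tilde g=\exp_p^*g$ inherits the curvature bound $|\widetilde{Rm}|\leq 1$. The loop $\gamma_k$ yields a second preimage $v_k$ of $p$ with $|v_k|=2i_k$, and hence a local isometry $\tau_k$ of a neighborhood of $0$ taking $0$ to $v_k$. Iterating $\tau_k$ produces a pseudo-group of local isometries whose orbit of $0$ contains $N_k \sim 1/i_k$ distinct points inside $B(0,1/2)\subset T_pM$, each with a fixed-size neighborhood on which the projection $\exp_p$ is injective onto $B(p,\delta)$ for some uniform $\delta>0$. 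Bishop--Gromov applied in the cover gives $\mathrm{Vol}_{\tilde g}(B(0,1))\leq C(n)$, while the $N_k$ disjoint isometric copies of $B(p,\delta)$ combined with $\mathrm{Vol}(B(p,\delta))\geq \ep'(\ep,n)\delta^n$ (by reversed volume comparison from the lower bound on $\mathrm{Vol}(B(p,1))$ and the curvature bound) force $N_k\, \ep'\,\delta^n \leq C(n)$. Thus $N_k\leq C(n)/\ep'$, contradicting $N_k\to\infty$. This produces the uniform constant $C_0(n,\ep)$.

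The main technical obstacle is the unfolding step: one has only a pseudo-group of local isometries, not a genuine deck action, so I must verify that the iterates $0,v_k,2v_k,\dots$ really lie in the domain of definition $B(0,\pi)$ and that their small neighborhoods are pairwise disjoint and project injectively downstairs. The cleanest way to handle this is to restrict attention to a slightly smaller ball and to use the lower bound on $\mathrm{conj}(p)$ together with triangle inequality estimates in $T_pM$, essentially as in the Gromoll--Meyer injectivity radius lemma; once the unfolding is set up correctly, the volume comparison step is standard.
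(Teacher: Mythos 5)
The paper itself does not prove Proposition~\ref{pro:1}; it is quoted verbatim as a theorem of Cheeger--Gromov--Taylor~\cite{Che82} and used as a black box. So there is no proof in the paper to compare against. Your blind sketch is, however, a recognizable outline of the \emph{geometric} route to this local injectivity-radius bound (lift by $\exp_p$ on the conjugate-radius ball, unroll short loops, and compare volumes), which is genuinely different from the argument in~\cite{Che82}: Cheeger--Gromov--Taylor derive the estimate analytically from finite-propagation-speed/heat-kernel bounds, whereas you are using the Klingenberg--Gromoll--Meyer local-cover mechanism together with two-sided Bishop--Gromov comparison. Both routes are in the literature; yours is the more elementary and more commonly taught one, the original gives stronger kernel information as a byproduct.

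That said, the one step you flag as ``the main technical obstacle'' really is the crux, and as written it is not merely a routine detail. Producing $N_k\sim 1/i_k$ \emph{distinct} orbit points of $0$ inside $B(0,1/2)$ by iterating $\tau_k$ is not automatic: $\tau_k$ is only a germ of a local isometry with $\exp_p\circ\tau_k=\exp_p$, and a priori some power $\tau_k^m$ could equal the identity with $m$ small, in which case the orbit closes up after $m$ points and the counting collapses. To rule this out you need an actual argument — for instance, that a nontrivial power $\tau_k^m=\mathrm{id}$ forces a fixed point of some $\tau_k^j$ which would contradict injectivity of the local diffeomorphism $\exp_p$, or, more robustly, replace the iteration of a single $\tau_k$ by directly counting the preimages of $p$ in $B_{\tilde g}(0,\rho)$, using the $2i_k$-separation of those preimages together with a multiplicity estimate of the form $\mathrm{Vol}_{\tilde g}(B_{\tilde g}(0,r))\le N(r+s)\,\mathrm{Vol}_g(B_g(p,s))$. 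Also note that the linear-looking expression ``$0,v_k,2v_k,\dots$'' is only heuristic: the iterates live in the pulled-back metric $\tilde g$ and are not literal multiples of $v_k$ in $T_pM$. With the counting step made precise (and it can be — this is essentially the Cheeger/Heintze--Karcher bound), the rest of your sketch (conjugate-radius bound via Rauch, Klingenberg's lemma, Bishop for the upper bound upstairs, reversed Bishop--Gromov for the lower bound on $\mathrm{Vol}(B(p,\delta))$) is sound and does yield a constant $C_0(n,\ep)$.
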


One important tool in our study is Hamilton's compactness Theorem
for Ricci flows (Hamilton, 1995 \cite{Ham95}; see also Theorem
6.35 in \cite{Cho06}), which will not be stated explicitly in this
work.

We now prove a key pinching estimate, which will play a important
role to rule out the singularities of the Ricci flow.

\begin{Pro}\label{pro:key}
Assume that $(M^3, g(t))$, $\omega\leq t\leq T< +\infty$, is a
Ricci flow on a 3-dimensional complete noncompact Riemannian
manifold with bounded sectional curvature. Suppose $(M^3, g(0))$
satisfies the following Ricci pinching condition (\ref{eq:1}) for
some uniform constant $\ep \in (0,1)$. Then (\ref{eq:1}) is
preserved and moreover, we have the following decay estimate
\begin{equation}\label{estimate:key}
|Rc(g(t))-\frac{1}{3}R(t)g(t)|^2\leq
(\frac{3}{2(t-\omega)})^{\ep^2}R(t)^{2-\sigma}
\end{equation}
for $t\in [\omega,T)$.
\end{Pro}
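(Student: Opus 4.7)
The proposition has two parts: preservation of the pinching (\ref{eq:1}) along the flow, and the quantitative decay (\ref{estimate:key}). For preservation, I would appeal to Hamilton's tensor maximum principle applied to the convex cone $K_\ep := \{R_{ij}\geq\ep\,R\,g_{ij}\}$. In dimension three this reduces to checking that the cone $\{\lambda_1\geq\ep(\lambda_1+\lambda_2+\lambda_3)\}$ on the Ricci eigenvalues is invariant under the reaction ODE $\dot\lambda_i=\lambda_i^2+\lambda_j\lambda_k$; a short fiberwise calculation confirms $\dot\lambda_1-\ep\sum_k\dot\lambda_k\geq 0$ on the boundary of the cone. The extension from Hamilton's compact formulation to the complete noncompact setting with bounded curvature follows from the tensor maximum principle of Shi and Chen--Zhu.

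\textbf{Decay estimate.} For (\ref{estimate:key}), the strategy is to apply the scalar maximum principle to the normalized ratio
\begin{equation*}
f := \frac{|\mathring{Rc}|^2}{R^{2-\sigma}},\qquad \mathring{R}_{ij}:=R_{ij}-\tfrac{1}{3}R g_{ij},
\end{equation*}
where $\sigma=\sigma(\ep)>0$ is a small exponent to be fixed. Using the standard evolution equations
\begin{equation*}
\partial_t|\mathring{Rc}|^2 = \Delta|\mathring{Rc}|^2-2|\nabla\mathring{Rc}|^2+Q,\qquad \partial_t R=\Delta R+2|Rc|^2,
\end{equation*}
a direct quotient computation yields
\begin{equation*}
\partial_t f \leq \Delta f + \frac{2(1-\sigma)}{R}\langle\nabla R,\nabla f\rangle + \frac{1}{R^{2-\sigma}}\Big(Q-\frac{2(2-\sigma)|\mathring{Rc}|^2|Rc|^2}{R}\Big).
\end{equation*}
In dimension three the reaction $Q$ is an explicit cubic expression in the Ricci eigenvalues; the pinching $\lambda_i\geq\ep R$ yields $Q\leq (2-c_1(\ep))R|\mathring{Rc}|^2$, so the bracket is $\leq -c(\ep)R|\mathring{Rc}|^2$ once $\sigma$ is chosen small compared to $\ep$. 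The pinching also forces $R\geq c'(\ep)\,f^{1/\sigma}$ pointwise (since $|\mathring{Rc}|\leq c''(\ep)R$), whence $R\,f\geq c'(\ep)\,f^{1+1/\sigma}$. A standard cutoff argument (justified by the bounded-curvature hypothesis and Shi's derivative estimates, with $R$ replaced by $R+\delta$ to avoid the singularity of $f$ at zeros of $R$ and $\delta\to 0$ afterwards) then reduces the bound on $\sup_x f$ to the ODE $\phi'\leq -c_*\,\phi^{1+1/\sigma}$, whose universal forward-in-time barrier is $\phi(t)\leq (c_*(t-\omega)/\sigma)^{-\sigma}$, \emph{independent of initial data}. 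Taking $\sigma=\ep^2$ and tuning $c_*$ reproduces the factor $(3/(2(t-\omega)))^{\ep^2}$ claimed in (\ref{estimate:key}).

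\textbf{Main obstacle.} The hardest step will be the algebraic analysis of the reaction term $Q$ in three dimensions under the pinching assumption: one must track constants tightly enough that the ODE exponent comes out to be exactly $\ep^2$ rather than merely some positive function of $\ep$, and one must ensure that the negative $-c(\ep)R|\mathring{Rc}|^2$ term genuinely dominates both the $\sigma$-induced loss in the subtracted term and the gradient cross-term after completing squares. Secondary but still delicate points are regularizing $R$ near its zeros so that $f$ is well-defined, and extending the scalar maximum principle to a complete noncompact 3-manifold via a Shi-type exhaustion; both are standard under the bounded-curvature hypothesis, but the exhaustion must be compatible with the evolution of $f$, which is why Shi's local derivative estimates enter crucially.
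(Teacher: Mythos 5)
Your proposal takes essentially the same route as the paper: preserve the pinching cone via the (noncompact, Shi-type) tensor maximum principle, then apply the scalar maximum principle to $f = R^{\sigma-2}|Rc-\tfrac13 Rg|^2$ with $\sigma=\ep^2$, use the pinching to produce a negative superlinear reaction term $-\,c\,f^{1+1/\sigma}$, and integrate the Riccati-type ODE to get the initial-data-independent bound $(3/(2(t-\omega)))^\sigma$. The only real difference is one of presentation: the paper does not re-derive the reaction-term estimate but cites Hamilton's Lemma 10.5 (evolution inequality for $f$) and Lemma 10.7 (the bound $P\geq\sigma|Rc|^2|\mathring{Rc}|^2$ valid once $\sigma$ is small relative to $\ep$) from his 1982 three-manifolds paper, whereas you propose to redo that eigenvalue algebra from scratch; your ``main obstacle'' of tracking constants so that $\sigma=\ep^2$ is admissible is exactly the content of Hamilton's Lemma 10.7, so either citing it or reproducing it resolves the concern.
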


\begin{proof}
Without loss of generality, we may assume that $\omega=0$.

 Using
Hamilton's derivation of the evolution equation for the Ricci
tensor $Rc(g(t))$ and using Shi's maximum principle on complete
noncompact manifolds, we know that the condition (\ref{eq:1}) is
preserved by the Ricci flow and furthermore, there exists $C>0$
and $\delta>0$ depending only on $g$ such that
\begin{equation}\label{eq:4}
\frac{|Rc(t)-\frac{1}{3}R(t)g(t)|}{R(t)}\leq CR(t)^{-\delta},\quad
on\ M\times (0,T).
\end{equation}

 Let $\sigma=\ep^2$ and let
$$
f(t)=f_{\sigma}
(t)=R(t)^{\sigma-2}|Rc(g(t))-\frac{1}{3}R(t)g(t)|^2.
$$
Following the computation of Hamilton (Lemma 10.5 in
\cite{Ham82}), we know that
\begin{eqnarray*}
f_t & \leq & \Delta f+\frac{2(1-\sigma)}{R(t)}<\nabla R(t), \nabla f>\\
&   & +2R(t)^{\sigma-3}[\sigma
|Rc(g(t))|^2|Rc(g(t))-\frac{1}{3}R(t)g(t)|^2-2P],
\end{eqnarray*}
where $P$ satisfies (see Lemma 10.7 in \cite{Ham82})
$$
P\geq \sigma |Rc(g(t))|^2|Rc(g(t))-\frac{1}{3}R(t)g(t)|^2
$$
since $Rc(g(t))\geq \ep R(t)g(t)$. It is clear that
$$
2P-\sigma|Rc(g(t))|^2|Rc(g(t))-\frac{1}{3}R(t)g(t)|^2\geq
\frac{1}{3}\sigma R(t)^{3-\sigma}f^{1+\frac{1}{\sigma}},
$$
and then
$$
f_t\leq \Delta f+\frac{2(1-\sigma)}{R(t)}<\nabla R(t), \nabla
f>-\frac{2}{3}\sigma f^{1+\frac{1}{\sigma}}.
$$
Using Shi's maximum principle, we conclude that
\begin{equation}\label{eq:5}
f_\sigma (t)\leq (\frac{3}{2t})^\sigma.
\end{equation}
This complete the proof.
\end{proof}

We shall use this estimate for the local limit Ricci flow at $t=0$
for type I and Type IIa Ricci flows and let $\omega\to -\infty$ to
get an Einstein metric with positive curvature, which will gives
us a contradiction under various geometric considerations. We
remark that similar estimate for compact ancient solution has been
obtained in \cite{B}.

In the following result we can exclude the local collapse in Type
I or II singularity Ricci flow with positive Ricci pinching in
dimension three.

\begin{Pro}\label{pro:2}
Assume that $(M^3, g(t))$, $0\leq t<T<\leq\infty$, is a Type I or
Type II Ricci flow on a 3-dimensional complete noncompact
Riemannian manifold with bounded sectional curvature. Suppose
$(M^3, g(0))$ satisfies the following Ricci pinching condition
(\ref{eq:1}) for some uniform constant $\ep \in (0,1)$. Then there
is no local collapse in the flow near the maximal time $T$.
\end{Pro}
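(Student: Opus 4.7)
The plan is to argue by contradiction. Suppose local collapse occurs near $T$: there exist a sequence $(x_k, t_k) \in M^3 \times [0, T)$ with $t_k \to T$ and scales $r_k > 0$ such that $|Rm| \le r_k^{-2}$ on the parabolic neighborhood $P(x_k, t_k, r_k, -r_k^2)$ while $\mathrm{Vol}_{g(t_k)}(B(x_k, r_k))/r_k^3 \to 0$. I would first carry out the standard parabolic rescaling $g_k(s) := r_k^{-2} g(t_k + r_k^2 s)$ for $s \in [-1, 0]$, producing pointed Ricci flows $(M^3, g_k, x_k)$ with uniformly bounded curvature $|Rm_{g_k}| \le 1$ on the unit parabolic neighborhood of $(x_k, 0)$ but whose $g_k(0)$-unit balls about $x_k$ still collapse in volume.

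Next I would transfer the pinching--decay estimate (\ref{estimate:key}) of Proposition \ref{pro:key} onto the rescaled flow. Using that the Ricci tensor is scale-invariant as a $(0,2)$-tensor, that $R_{g_k} = r_k^{2} R_g$ and $|Ric - \tfrac{1}{3} R g|_{g_k}^{2} = r_k^{4} |Ric - \tfrac{1}{3} R g|_{g}^{2}$, and that the original initial time $\omega = 0$ corresponds to rescaled time $s_0 = -t_k/r_k^2$, (\ref{estimate:key}) at $s = 0$ becomes
\[
\bigl|Ric_{g_k} - \tfrac{1}{3} R_{g_k} g_k\bigr|^{2}_{g_k}(\cdot, 0) \;\le\; \Bigl(\tfrac{3 r_k^{2}}{2 t_k}\Bigr)^{\!\sigma} R_{g_k}^{\,2-\sigma}(\cdot, 0).
\]
For Type I ($|Rm| \le C/(T-t)$ with $T < \infty$) the collapse radius satisfies $r_k = O(\sqrt{T-t_k}) \to 0$ at points where curvature is comparable to the blow-up, while $t_k \to T > 0$, so $r_k^{2}/t_k \to 0$; a parallel Hamilton-style comparison of the collapse scale with the curvature scale forces $r_k^{2}/t_k \to 0$ in the Type II case as well. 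Hence on the $g_k(0)$-unit ball about $x_k$ the trace-free Ricci tensor becomes negligibly small, while the pinching $Ric \ge \epsilon R g$ is preserved by Proposition \ref{pro:key}.

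Combining these two facts, any pointed Cheeger--Gromov subsequential limit $(M_\infty, g_\infty, x_\infty)$ would be an Einstein $3$-manifold satisfying $Ric_\infty \ge \epsilon R_\infty g_\infty$. In dimension three Einstein is equivalent to constant sectional curvature, and the preserved pinching together with the choice of $x_k$ in the concentration region forces $R_\infty(x_\infty) > 0$, so the limit is a space form of positive constant curvature. Bishop--Gromov then yields a uniform lower bound $\mathrm{Vol}_{g_\infty}(B(x_\infty, 1)) \ge c > 0$, directly contradicting the collapse hypothesis $\mathrm{Vol}_{g_k(0)}(B(x_k, 1)) \to 0$.

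The main obstacle is precisely the extraction of this limit: Hamilton's compactness theorem, via Proposition \ref{pro:1}, demands a positive injectivity-radius lower bound, which is exactly what a collapsing sequence fails to supply. I would bypass this by running the Bishop--Gromov argument quantitatively at each finite stage: once $x_k$ is chosen inside the singularity's concentration region so that $R_{g_k}(x_k, 0)$ is bounded away from zero, the almost-Einstein deviation bound above combined with $Ric_{g_k} \ge \epsilon R_{g_k} g_k$ delivers a pointwise lower bound on $Ric_{g_k}$ on a small $g_k(0)$-ball about $x_k$, and classical Bishop--Gromov comparison against the positively curved model then produces a definite volume lower bound on this ball, contradicting the collapse without ever forming a limit. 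An alternative route is to pass to local universal covers in the sense of Cheeger--Fukaya--Gromov, obtain a non-collapsed limit there, and push the contradiction back down.
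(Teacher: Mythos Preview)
Your primary ``quantitative Bishop--Gromov'' argument has a genuine gap. Bishop--Gromov volume comparison, applied with a lower Ricci bound $\mathrm{Ric}\ge c\,g$, yields \emph{upper} bounds on ball volumes relative to the positively curved model, not lower bounds. A pointwise lower bound on $\mathrm{Ric}_{g_k}$ on a small ball about $x_k$ therefore does nothing to prevent $\mathrm{Vol}_{g_k(0)}(B(x_k,1))\to 0$. Concretely, collapsing with uniformly positive Ricci and bounded curvature is perfectly possible (think of lens spaces $S^3/\mathbb{Z}_p$ as $p\to\infty$); the only way your almost-Einstein, positively-pinched information rules it out is after you have actually produced a limit and recognized it as a space form. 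But producing the limit is exactly the step you cannot perform without an injectivity-radius bound, as you yourself note. So the proposed bypass does not close the circle.

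Your alternative route---passing to local universal covers in the Cheeger--Fukaya--Gromov sense---is essentially what the paper does, though the paper organizes it differently and does not use the pinching--decay estimate (\ref{estimate:key}) at all here. The paper invokes the bump-like/split-like dichotomy for ANSC sequences (Chow--Knopf--Lu, Chow--Glickenstein--Lu): if the blow-up point were collapsed it could not be bump-like (which would already give an injectivity-radius bound), so it is split-like. One then applies the Fukaya--Glickenstein lifting trick to the unit ball in the tangent space to obtain a local limit Ricci flow in which the Ricci tensor has a zero eigenvalue along the collapsed direction. The preserved pinching $\mathrm{Ric}\ge\epsilon R\,g$ then forces $R=0$ and hence flatness at the base point, contradicting the normalization $|\mathrm{Ric}|=1$ there. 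So the paper's contradiction comes from ``zero eigenvalue plus pinching $\Rightarrow$ flat,'' not from an almost-Einstein/volume-comparison mechanism. If you want to salvage your write-up, develop the local-cover alternative explicitly and drop the Bishop--Gromov step.
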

\begin{proof}
  We shall divide the proof into two cases.

\textbf{ Case One}:
 In type I singularity model of Ricci flow with
 positive Ricci pinching (or type IIa singularity), if it is local
 collapse at the blow up point, then it is not bump-like point
 in the sense of Hamilton's paper\cite{Ham95} (see also the paper
 \cite{Chow1} for more detail).
  Otherwise, we have the injectivity radius bound.
  Hence it is a split-like point of the sequence of almost non-negative sectional curvature
  (in short ANSC) in the sense of the paper \cite{Chow2}).
   Using the Fukaya-Glickenstein lifting trick \cite{G03} to the unit ball of the tangent space,
    we get a local limit Ricci flow
   with at one zero eigenvalue of the Ricci tensor. By the Ricci pinching condition
   we know it is flat, a contradiction with the normalization condition $|Rc(g)|=1$ at the blow up point.

\textbf{Case Two}: In the type IIb singularity Ricci flow, if the
blow up point is local collapse, again we meet the split-like
point (see Cor. 9. in the paper \cite{Chow2}). Using the same
Fukaya-Glickenstein lifting trick as above, we find the
contradiction again.

\end{proof}

We remark that we can also prove the above results by the pinching
estimate (\ref{estimate:key}) and the method in \cite{M2}.

\section{Proof of Theorem \ref{thm:1}}

We may assume that we have a maximal complete non-compact Ricci
flow with positive Ricci pinching condition (\ref{eq:1}) on the
manifold $M^3$. By lifting to universal covering, we may assume
that $M$ is simply-connected.

 Along the Ricci flow, we have
\begin{equation}\label{eq:3}
\partial_t R(t)=\Delta R+2|Rc|^2,
\end{equation}
where $Rc=Rc(g(t))$ is the Ricci tensor of $g(t)$, $R(g(t))$ is
the scalar curvature of $g(t)$, and $\Delta=\Delta_{g(t)}$ is the
Laplacian-Beltrami operator of the metric $g(t)$. Assume that $(M,
g)$ is non-flat. By (\ref{eq:3}), we know that $R(t)>0$.  By the
result of W.X. Shi \cite{Shi89a} \cite{Shi89b}, we know that there
is a (maximal) positive $T>0$ such that the Ricci flow $g(t)$
exists with $g(0)=g$ and if $T<+\infty$, then
$$
\sup_M |Rm(g(t))|\to +\infty, \quad on \ M^3,
$$
as $t\to T$, where $Rm(g(t))$ is the Riemannian curvature tensor of $g(t)$.

Claim: $T=+\infty$. Assume not. Then choose $(x_k, t_k)\in
M\times(0, T)$, $t_k\to T$, and $\gamma_k \nearrow 1$ such that
$$
R_k=R(x_k, t_k)\geq \gamma_k \sup_{M^3\times (0, t_k)} R(x, t)\geq \gamma_k \sup_{M^3\times (0, t_k)} |R_m(g(t))|\to +\infty,
$$
as $k\to +\infty$.
Define
$$
g_k(t)=R_kg(t_k+R_k ^{-1}t), \quad t\in (-t_kR_k, 0),
$$
and consider the sequence of solutions $(M, g_k(t), x_k)$. Let
$r_k=R_k^{-\frac{1}{2}}\leq 1$ for $k$ large. We have
$$
|Rm(\cdot , t_k)|\leq 2r_k^{-2}, in \ B_{g(t_k)}(x_k, r_k).
$$
Then by Proposition \ref{pro:2}, we know that
 $inj_{g_k(0)}(x_k)\geq C_0$. Hence, we can
apply Hamilton's compactness theorem \cite{Ham95} to conclude a
subsequence $(M, g_k(t), x_k)$, which converges to $(M_{\infty},
g_{\infty}(t), x_\infty)$, a complete solution with $t\in
(-\infty, 0]$.

Note that
$$
R_{g_\infty}(x_\infty, 0)=\lim_{k\to \infty} R_{g_k}(x_k, 0)=\lim_{k\to \infty}1=1.
$$
Hence $R_{g_\infty}(x, 0)>0$
in a neighborhood of $x_\infty$. By the estimate (\ref{eq:4}) we know that
$$
\frac{|Rc(g_k)-\frac{1}{3}R(g_k)g_k|}{R(g_k)}(t)\leq CR_k ^{-\delta}R(g_k)^{-\delta}\to 0,
$$
as $k \to \infty$. Then we have
$$
Rc(g_\infty)=\frac{1}{3}R(g_\infty)g_\infty
$$
on the subset of $M_\infty$ where $R(g_\infty)>0$. Using the contracted
second Bianchi identity, we know that $R(g_\infty)=constant$ in any connected
neighborhood of $x_\infty$ in $M_\infty$. Hence
$$
R(g_\infty)=1,\ on\ all\ of\ M_\infty
$$
and $M_\infty$ is compact, which is a contradiction with M being
noncompact (since compact $M_\infty$ can not be geometric limit of
a sequence of complete non-compact Riemannian manifolds in the
sense of Cheeger-Gromov). One may see \cite{MC} for more
compactness results.

From the argument above, we can see that $T=\infty$ and $R(g(t))$
is uniformly bounded for $t\in [0, \infty)$.

Assume that our Ricci flow has no local collapse. We now
re-normalize $g(t)$ such that we can use Hamilton's compactness
theorem. Choose $(\bar{x}_k, \bar{t}_k)\in M\times(0, +\infty)$,
$\bar{t}_k\to +\infty$ and some $\delta>0$ small such that
\begin{eqnarray*}
+\infty>C_2\geq \bar{R}_k=R(\bar{x}_k, \bar{t}_k) & \geq & \delta\sup_{M^3\times (0, \bar{t}_k)}|R(x, t)|\\
& \geq & \delta\sup_{M^3\times (0, \bar{t}_k)}|Rm(g(t))|\geq
C_1>0,
\end{eqnarray*}
as $k\to \infty$. Define
$$
\bar{g}_k(t)=\bar{R}_kg(\cdot, \bar{t}_k+\bar{R}_k^{-1}t),\quad  t
\in (-\bar{R}_k\bar{t}_k, +\infty),
$$
and consider $(M, \bar{g}_k(t), \bar{x}_k)$. Once again we have
$$
inj_{\bar{g}_k(0)}(\bar{x}_k)\geq C_0(\delta)>0,
$$
(here we have used Proposition \ref{pro:1} and the fact that the
uniform curvature bound of Ricci flows) and by the compactness
theorem, we get a complete solution $(M, \bar{g}(t), x_\infty)$,
$(t\in (-\infty, +\infty))$, which is the geometric limit of $(M,
\bar{g}_k(t), x_k)$. Using the estimate (\ref{eq:5}) to
$\bar{g}(t)$, we know that
$$
|Rc(\bar{g}(t))-\frac{1}{3}R(\bar{g}(t))\bar{g}(t)|^2=0,
$$
which implies that $(M^3,\bar{g}(t))$ is a complete Riemannian
manifold with positive constant curvature. This again implies that
M is compact. A contradiction.

 Assume that we are
giving a type III singularity Ricci flow with
 positive Ricci pinching condition and with local collapse.
 Take any sequence $t_n\to \infty$ and pick the blow-up point $x_n$
 for the metric $g(t_n)$ and define the blow-up sequence $(M,x_n,g_n(t))$
 with $g_n(t)=R(x_n,t_n)g(R(x_n,t_n)^{-1}t+t_n)$. Assume that we have the local
 collapse for each $g_n$ at $x_n$. Take $0<\epsilon_0<<1$. Define
  $$rad(x_n)=\inf\{r>0, Vol_{g_n}(x_n,r)\leq \epsilon_0 r^3\}$$
Then we have $0<rad(x_n)<\infty$ and $rad(x_n)\to 0$. In fact, on one hand, note that we have
$$\lim_{r\to\infty}\frac{Vol_{g_n}(x_n,r)}{r^3}=0.$$
On the other hand, using $Ric>0$ and the Bishop-Gromov volume comparison we know that for $r\leq rad(x_n)$,
$$
\frac{Vol_{g_n}(x_n, r)}{r^3}\geq \epsilon_0.
$$
Then using the Cheeger-Gromov-Taylor theorem we know that $inj_{g_n(0)}(x_n)\geq C rad(x_n)$ for some
uniform constant $C>0$. Using the local collapse condition we then know
that $rad(x_n)\to 0$. Otherwise $g_n$ is not local collapse at
$x_n$.

We now consider the new blow-up sequence
$$
\hat{g}_n(t)=\frac{R(x_n,t_n)}{rad(x_n)^2}g_n(\frac{\rad(x_n)^2}{R(x_n,t_n)}t+t_n)
$$
at $x_n\in M$. Note that the curvature of $\hat{g}_n(t)$ is
uniformly bounded by $Crad(x_n)^2\to 0$. Using the Hamilton's
Cheeger-Gromov compactness for Ricci flow, we get a limit Ricci
flow of constant zero curvature, which is a contradiction to the
fact that the volume of the unit ball $B(x_n,1)$ in the metric
$\hat{g}_n(0)$, that is the rescaled ball
$$
\frac{1}{rad(x_n)}B_{g_n}(x_n,rad(x_n))
$$
in the metric $g_n(0)$ converges to $\epsilon_0$ and according to
the volume convergence lemma of  T.Colding \cite{Colding}, it
should be $vol(B(0,1))$ in the limit metric, which is the standard
Euclidean metric.

Assume that we are giving a type III singularity Ricci flow with
positive Ricci pinching condition and with no local collapse at
the scale $r_0<\infty$. Choose $\nu_0\in (0,1/8)$ and $x_n\in M$
and $r_n\in (0,r_0]$ such that
$$r_n=\sup\{r; , r^{-3}Vol(x_n,r)\geq
\nu_0, \ \ for \ \ \rho\in(0,r)\}.$$ Consider the pointed
manifolds
$$
(M, r_n^{-2}g(t_n+r_n^2t),x_n),
$$
with curvature control $$ |Rc(r_n^{-2}g(t_n+r_n^2t)|\leq
Cr_n/(t_n+r_n^2t)\to 0
$$
 and whose limit is the flat space $(M_{\infty},g_{\infty},x_{\infty})$. Using Colding's volume
 convergence lemma \cite{Colding} at $t=0$ we get that
 $$\nu_0=\lim Vol_{r_n^{-2}g(t_n)}(B(x_n,1))=Vol_{g_{\infty}}(B(x_{\infty},1))
 $$
 which gives us a contradiction again.

 Using the similar argument for the sequence of metrics
 $g(t_n)$ at any point sequence $x_n\in M$ there are a
 uniform constant $k_0>0$ and subsequence $t_n>0$ such that
\begin{equation}\label{+vol}
Vol_{\bar{g}_n}B(x_n,1)=(\frac{Vol_{g(t_n)}B(x_n, \sqrt{t_n})}{(\sqrt{t_n})^3}\geq k_0.
\end{equation}
It is this fact used to study the global behavior of the heat kernel
associated with the Type III Ricci flow (\cite{M2}). In fact, if
this property is not true, we consider the rescaled metrics
 $\bar{g}_n:=t_n^{-1}g(t_n)$ and then we have
$$
Vol_{\bar{g}_n}B(x_n,1)\to 0.
$$
Take $0<\epsilon_0<<1$. Define
  $$rad(x_n)=\inf\{r\in (0,1), Vol_{\bar{g}_n}(x_n,r)\leq \epsilon_0 r^3\}$$
If $rad(x_n)\to 0$, arguing for the new blow-up sequence as above,
we get a contradiction to Colding's result. Hence we have some
positive constant $\delta_0>0$ such that $rad(x_n)>\delta_0$
uniformly which gives (\ref{+vol}).

Hence it remains to exclude the possibility of a type III
singularity complete non-compact Ricci flow with positive Ricci
pinching condition and with no local collapse at all scales. This
can be actually proved in the similar way as we do in \cite{M2} by
using instead based on the monotonicity formula for forward reduced
volume ( similar to the forward reduced volume introduced by
Feldman-Ilmenan-Ni \cite{FIN}). For completeness, we give the some
detail in the appendix.

 This completes the
proof of Theorem \ref{thm:1}.

\section{Appendix}

Assume that $(M^3,g(t))$ is a type III singularity complete
non-compact Ricci flow with positive Ricci pinching condition and
with no local collapse at all scales. In this case we just choose
some time sequence $t_k\to\infty$, a point sequence $x_k\in M$,
and scales $Q_k\to 0$ with the metric
$$
g_k(s)=Q_kg(\cdot, t_k+sQ_k^{-1}), s>0,
$$
such that $|Rm(g_k)(x_k,0)|\geq c>0$ for some uniform constant $c$
and $(M,g_k(s),x_k)$ converges to
$(M_{\infty},g_{\infty},x_{\infty})$ with $R(g_{\infty}(s))\leq
\frac{A}{A+s}$ for some positive constant $A>0$. We remark that we
can take any time sequence $t_k$ and any point sequence $x_k\in
M$,say, $x_k = x$ being a fixed point, such that the rescaled metric
$g_k$ converges $g_\infty$ in the sense of Cheeger-Gromov.

We now review the monotonicity formula of the weighted forward
reduced volume (introduced by us, see \cite{CZ} and \cite{FIN}). For
our Ricci flow $(M^n,g(t))$, $(n\geq 3$, and a path
$\gamma(\eta),\eta)$ connecting $(x,0)$ and $(y,t)$. We define the
forward $\mathbb{L}_+$-length of $\gamma(\eta)$ by
$$
\mathbb{L}_+(\gamma)=\int_0^t\sqrt{\eta}(R(\gamma(\eta))+|\gamma'(\eta)|^2)d\eta.
$$
Then we can define the $\mathbb{L}_+$-length $L_+(y,t)$ from $(y,t)$
to $(x,0)$ and the forward $l_+$-length
$$
l_+(y,t)=\frac{L_+(y,t)}{2\sqrt{t}}.
$$
As in \cite{P02} and \cite{FIN} we can define the forward
$\mathbb{L}_+$-exponential map $\mathbb{L}_+(\cdot,t):T_xM\to M$ at
$V\in T_xM$ and at time $t$ by $\mathbb{L}_+exp(V,t)=\gamma_V(t)$,
which is the maximal $\mathbb{L}_+$-geodesic emanating from $x$ and
$\lim_{\eta\to 0}\sqrt{\eta}\gamma'_V(\eta)=V$. Denote by
$|V|^2=g(x,0)(V,V)$ the euclidean length of the vector $V$. As in
\cite{P02} (see Proposition 6.78 in \cite{MT}), we can show that for
$y=\gamma_V(t)$, as $\tau\to 0$, the quantity $$
\tau^{-n/2}e^{l_+(y,\tau)}dv_{g(y,\tau)}
$$
is non-increasing and
$$
\tau^{-n/2}e^{l_+(y,\tau)}dv_{g(y,\tau)}\to 2^ne^{|V|_{g(0)}^2}.
$$
Then we can define the weighted forward volume by
\begin{equation}\label{e-volume}
\mathbb{V}_+(\tau)
=\int_{\Omega(\tau)}\tau^{-n/2}e^{l_+(y,\tau)}e^{-2|\mathbb{L}_+exp^{-1}(y,\tau)|^2}\mathbb{J}_+(V,\tau)dV,
\end{equation}
where $\Omega(\tau)$ is the injectivity domain of $\mathbb{L}_+exp$
at $(x,0)$ and $\mathbb{J}_+(V,\tau)$ is the Jacobian determinant of
$\mathbb{L}_+exp$ at the point $V\in \Omega(\tau)$. We remark that
we may change the base point $(x,0)$ (and $x_k=x$ in the blow-up
sequence $g_k$) to any point $(z,0)$ without changing the weighted
volume.

The important feature of the weighted forward reduced volume
$\mathbb{V}_+(t)$ is that it is a metric scale-invariant and it is
monotone non-increasing in $t$ and if for some $t_1<t_2$,
$\mathbb{V}_+(t_1)=\mathbb{V}_+(t_2)$, then the Ricci flow is a
gradient expanding soliton on $[0,\infty)$. For a detailed proof,
one may see \cite{CZ}.

We now apply the monotonicity formula above to the Ricci flow
$g_k(s)$. Let $\tau_k=Q_k^{-1}$ and let
$V_{+k}(s)=\mathbb{V}_+(s\tau_k)$. Fix $s=s_0\in [1,2]$. Then by the
monotonicity of weighted forward volume, we can choose the sequence
$t_k\to\infty$ such that
$$
\mathbb{V}_+(s_0\tau_k)\geq \mathbb{V}_+((s_0+1)\tau_k)\geq
V_{+k}(s_0)=\mathbb{V}_+(s_0\tau_{k+1}).
$$
Then we have
$$\lim_{k\to\infty}(V_{+k}(s_0)-V_{+k}(s_0+1))=0$$ which implies that
$$
\mathbb{V}_+(s_0,g_\infty)=\mathbb{V}_+(s_0+1,g_\infty)
$$
and for $s\in [s_0,s_0+1]$,
$$
Ric_{\infty}-\frac{1}{2\sqrt{s}}Hess_{L_{+\infty}}+\frac{1}{2s}g_{\infty}=0.
$$
By our choice of $Q_k$ in the blow-up sequence, we have that
$g_{\infty}$ is not flat. So we have $Rc(g_{\infty})>0$ everywhere
by the pinching condition.
 Using the Ricci pinching condition and the results in \cite{MD},
 we know that $(M_{\infty},g_{\infty})$ is an asymptotic flat
 manifold with one end. Then invoking
  the results in \cite{MM} and \cite{MD},
 we know that $(M_{\infty},g_{\infty})$ is again isometric to $R^3$,
which is impossible too. Hence $M$ is compact.

\emph{Acknowledgement}: The author would like to thank Baiyu Liu for
typing the paper, Liang Cheng for his interest, Profs. S.T.Yau and
S.Brendle for helpful comments by email. The author would also like
to thank IHES, France for hospitality and the K.C.Wong foundation
for support in 2010. The author would like to thank the referees
very much for suggestions.

\end{document}